\newtheorem{theorem}{Theorem}
\newtheorem{proposition}[theorem]{Proposition}
\newtheorem{lemma}[theorem]{Lemma}
\newtheorem{corollary}[theorem]{Corollary}
\newtheorem{conjecture}[theorem]{Conjecture}
\title{Triangle-free graphs with large chromatic number and no induced wheel}
\author{James Davies\thanks{Department of Combinatorics and Optimization, University of Waterloo, Waterloo, Canada. E-mail: \texttt{jgdavies@uwaterloo.ca}.}}
\date{}
\begin{document}

\maketitle

\begin{abstract}
	A wheel is a graph consisting of an induced cycle of length at least four and a single additional vertex with at least three neighbours on the cycle.
	We prove that no Burling graph contains an induced wheel.
	Burling graphs are triangle-free and have arbitrarily large chromatic number, so this answers a question of Trotignon and disproves a conjecture of Scott and Seymour.
\end{abstract}

\section{Introduction}

A \emph{wheel} is a graph consisting of an induced cycle of length at least 4 and a single additional vertex with at least $3$ neighbours on the cycle.
We say that a class of graphs is \emph{$\chi$-bounded} if for every positive integer $k$, there exists an upper bound (depending only on $k$) for the chromatic number of the graphs in the class that contain no induced complete subgraph on $k$ vertices.

Burling~\cite{burling1965coloring} gave a construction of triangle-free graphs with arbitrarily large chromatic number.
We will show that these graphs contain no induced wheel.
After this paper was written, the author was informed that Scott and Seymour had independently proved this, but the result had only been communicated privately (see~\cite{Pournajaf2021burling}).

\begin{theorem}\label{burling}
	No Burling graph contains an induced wheel.
\end{theorem}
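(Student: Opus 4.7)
The plan is to induct on the level of the Burling construction. Recall that $B_1$ is a single vertex, and $B_{k+1}$ is assembled from several disjoint copies of $B_k$ together with a layer of newly added vertices, each of whose neighbourhoods is confined to the distinguished stable set of a single copy. The base case is vacuous. For the inductive step, I would assume that no $B_k$ contains an induced wheel and suppose, for contradiction, that there is an induced wheel in $B_{k+1}$ with hub $v$ and cycle $C$ of length at least $4$; let $N = N(v) \cap V(C)$, so $|N|\ge 3$ and, by triangle-freeness, the vertices of $N$ are pairwise non-consecutive on $C$.

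I would then split the argument according to whether $v$ is one of the newly added vertices at level $k+1$ or lies inside one of the copies of $B_k$. In the first case, the whole set $N$ lies in a single stable set $S$ of some copy $G$ of $B_k$, and the three or more arcs of $C$ between consecutive vertices of $N$ give three or more internally disjoint induced paths between vertices of $S$ that avoid $v$. I would argue that each such arc must either stay inside $G$ or use further new vertices of level $k+1$; in either case the structural constraints on new-vertex neighbourhoods (each attached to a single stable set, and the attachments themselves forming independent sets) force either a chord of $C$ or an additional edge to $v$, contradicting the assumption that $(C,v)$ is an induced wheel.

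The second case, where $v$ lies inside some copy $G$ of $B_k$, is the one I expect to be the main technical obstacle. Here the neighbours of $v$ in $B_{k+1}\setminus G$ can only be new vertices whose attachments are to a distinguished stable set disjoint from $v$, so all of $N$ must in fact lie in $G$. The cycle $C$, however, may wander out of $G$ through these new vertices. My goal would be to replace each excursion of $C$ outside $G$ by a shortcut inside $G$ between its two entry points, which necessarily lie in a common stable set, in order to produce an induced wheel of $G$ with hub $v$ and apply the inductive hypothesis. Verifying that such shortcuts can always be chosen without creating chords of the shortened cycle or spurious edges to $v$ is the delicate point, and should rely on the fact that the stable sets used at each level of the construction are themselves well-structured subsets of $B_k$.
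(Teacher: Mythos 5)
There is a genuine gap, and it is located exactly where you anticipate trouble, plus one outright false step. First, the false step: in your second case you assert that when the hub $v$ lies in a copy $G$ of the previous level, all of $N$ must lie in $G$ because the new vertices attach to ``a distinguished stable set disjoint from $v$''. That is not so. A new vertex added at level $k+1$ to the copy $G$ has neighbourhood equal to one of the designated stable sets of $G$, and the hub $v$ may belong to several of those stable sets; hence $v$ can have arbitrarily many neighbours on $C$ among the new vertices, and the case analysis as you have set it up does not go through. Second, and more fundamentally, the inductive hypothesis ``no induced wheel in $B_k$'' is too weak to close either case. When you push the wheel down into a single copy $G$, what survives inside $G$ is never a wheel: in your first case (hub a new vertex, cycle inside $G$) it is an induced cycle of $G$ meeting one designated stable set in at least three vertices; when the cycle also visits other new vertices attached to $G$, the trace inside $G$ is an induced path meeting a stable set three times, or a fan whose pivot's path ends in a stable set. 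None of these configurations is a wheel, so the inductive hypothesis gives you nothing with which to derive a contradiction, and your proposed ``shortcut'' in the second case --- an induced path inside $G$ between the two entry points of an excursion, avoiding chords to the rest of $C$ and extra edges to $v$ --- has no reason to exist.

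The fix, which is the actual content of the paper's argument, is to strengthen the induction: one carries along the designated stable sets (encoded as a set $T$ of representative vertices, forming a \emph{graft} $(G,T)$) and proves inductively not only that $G$ has no induced wheel, but also that $(G,T)$ has no induced \emph{guarded fan} (a fan whose path ends in $T$) and no induced \emph{mountable path} (an induced path meeting $T$ at least three times). These are precisely the non-wheel configurations listed above that a straddling wheel would leave behind in a copy, so with them in the hypothesis the contradiction becomes available. The paper further modularizes the step from level $k$ to level $k+1$ into three primitive operations (\textsc{pendent}, \textsc{clone}, \textsc{join}) and checks that each preserves the strengthened invariant, which keeps the case analysis manageable; see in particular the proof of Lemma~\ref{join}, where the wheel case reduces to the guarded fan and mountable path conditions. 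Your outline identifies the right decomposition of the construction but cannot be completed without some such strengthening of the invariant.
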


Trotignon~\cite{trotignon2013perfect} asked if the class of graphs containing no induced wheel is $\chi$-bounded. An immediate corollary of Theorem~\ref{burling} answers this question in the negative.

\begin{corollary}\label{counterexample}
	For every positive integer $k$, there exists a triangle-free graph $G$ that contains no induced wheel and has chromatic number at least $k$.
\end{corollary}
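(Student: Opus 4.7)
The plan is to simply combine Theorem~\ref{burling} with the two defining features of Burling's construction~\cite{burling1965coloring}. Given a positive integer $k$, I would invoke Burling's theorem to produce a Burling graph $G$ that is triangle-free and satisfies $\chi(G) \ge k$; this is exactly the output of the construction in~\cite{burling1965coloring}, so no new argument is needed on that side. Theorem~\ref{burling} then applies to $G$ and guarantees that $G$ contains no induced wheel.

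That is all the content there is: the corollary is a one-line consequence because triangle-freeness and unbounded chromatic number are built into the definition of Burling graphs, and Theorem~\ref{burling} supplies the missing property that no induced wheel appears. There is no real obstacle; the only bookkeeping is to make sure we are using the same family of Burling graphs whose chromatic number Burling proved to be unbounded, which is the one referenced in the theorem. Since $k$ was arbitrary, the corollary follows.
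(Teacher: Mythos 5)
Your proposal is correct and matches the paper exactly: the paper treats Corollary~\ref{counterexample} as an immediate consequence of Theorem~\ref{burling} combined with Burling's original result that his graphs are triangle-free with unbounded chromatic number. Nothing further is needed.
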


Wheels are one of the four \emph{Truemper configurations}. Truemper configurations (often implicitly) play a key role in understanding the structure of many classes of graphs~\cite{vuskovic2013world}. Perhaps the most famous example of this is their use in the proof of the strong perfect graph theorem~\cite{chudnovsky2006strong}.
Due to their repeated appearance in studying various classes of graphs, it is desirable to better understand classes defined by forbidding just some of the four Truemper configurations.

A \emph{theta} is a graph consisting of three internally vertex disjoint paths of length at least two that share common starting and endpoints such that every edge is contained in one of the three paths. Thetas are another Truemper configuration.
A theorem of K\"{u}hn and Osthus~\cite{kuhn2004induced} implies that graphs with no induced theta are $\chi$-bounded.
The other two Truemper configurations contain triangles, so a class of graphs defined by forbidding any collection of the four Truemper configurations as induced subgraphs must forbid some thetas to be $\chi$-bounded.

It remains open whether or not there is a polynomial $\chi$-bounding function for graphs with no induced theta.
The class of graphs with no induced theta or wheel has $\chi$-bounding function $\max\{\omega,3\}$~\cite{radovanovic2020theta3} (where $\omega$ is the size of the largest complete subgraph).

Aboulker and Bousquet~\cite{aboulker2015excluding} conjecture that for every positive integer $k$, the class of graphs without an induced cycle with exactly $k$ chords is $\chi$-bounded.
Trotignon and Kristina Vu\v{s}kovi\'{c}~\cite{trotignon2010chord} proved this when $k=1$ and Aboulker and Bousquet~\cite{aboulker2015excluding} proved the conjecture for $k\in\{2,3\}$.
Bousquet and St{\'e}phan Thomass{\'e}~\cite{bousquet2015chromatic} also proved that if $\ell$ is a positive integer, then triangle-free graphs containing no induced cycle with exactly $k=\ell(\ell -2)$ chords have bounded chromatic number.
For a positive integer $k$, a \emph{$k$-wheel} is a graph consisting of an induced cycle of length at least 4 and a single additional vertex with at least $k$ neighbours on the cycle.
Notice that a $k$-wheel contains a cycle that induces exactly $k-3$ chords.

Scott and Seymour~\cite{scott2020survey} made the stronger conjecture that for every positive integer $k$, the class of graphs with no induced $k$-wheel is $\chi$-bounded.
Although Corollary~\ref{counterexample} disproves this conjecture, we believe that a slightly weaker statement of the same style should still hold.
For a positive integer  $k$, a \emph{$k$-fan} is a graph consisting of an induced path and a single additional vertex with at least $k$ neighbours of the path. We call a $3$-fan simply a \emph{fan}.

\begin{conjecture}\label{k-fan}
	For every positive integer $k$, the class of graphs with no induced $k$-fan is $\chi$-bounded.
\end{conjecture}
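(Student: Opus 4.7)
The natural approach would be induction on $k$. For $k \le 2$ the statement is trivial, since every graph is $k$-fan-free. The first nontrivial case, $k=3$ (fan-free graphs), is where I would focus the bulk of the effort; once that is understood, larger $k$ should be approachable by iterating whatever structural argument works there, or by directly extending the proof with longer ``fan handles''.

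For the chromatic upper bound itself, I would use the BFS levelling technique that is standard in the Scott--Seymour $\chi$-boundedness programme. Fix a vertex $v$ in a $k$-fan-free graph $G$ with $\omega(G)=\omega$, and let $L_i$ be the set of vertices at distance exactly $i$ from $v$. Since $\chi(G)\le 2\max_i \chi(G[L_i])$, it suffices to bound $\chi(G[L_i])$ for each layer $L_i$ in terms of $\omega$ and $k$. The key observation is that any vertex $u\in L_{i-1}$ with $k$ neighbours on an induced path in $G[L_i]$ produces an induced $k$-fan in $G$; hence parents in $L_{i-1}$ may touch induced paths of $L_i$ in at most $k-1$ vertices. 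I would then try to amplify this by pulling induced paths of $G[L_i]$ back through geodesics to $v$, so that structural features in the layer translate into long induced paths in $G$ on which the ``parent'' restriction applies. The ultimate goal is to show that $G[L_i]$ itself lies in some already controlled $\chi$-bounded subclass, or that it is $(k-1)$-fan-free in a suitable auxiliary graph, which closes the induction.

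The main obstacle, and the reason the conjecture remains open, is that forbidding a $k$-fan is a purely \emph{local} constraint between one vertex and one induced path, while the known tools for $\chi$-bounding Truemper-like classes (notably K\"uhn--Osthus for theta-free graphs, which is explicitly invoked in the introduction) tend to rely on significantly richer global structure. Theorem~\ref{burling} is itself a cautionary signal here: $k$-wheel-freeness is insufficient, so a successful argument must genuinely exploit the difference between an induced path and an induced cycle as the ``base'' of the forbidden configuration. Concretely, I would expect the proof to rule out the long induced ``zig-zag'' paths characteristic of Burling's construction by arguing that, in a triangle-free setting, sufficiently many such zig-zags together with a common ancestor in a lower BFS layer must produce an induced $k$-fan. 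Making this quantitative, and combining it with a Gy\'arf\'as-type extraction to handle larger cliques, seems to me the crux of the problem.
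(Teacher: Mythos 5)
You are attempting to prove Conjecture~\ref{k-fan}, which the paper states as an \emph{open} conjecture; there is no proof in the paper to compare against. The only case the paper asserts is $k\le 3$, via the structure theorem of Trotignon and Vu\v{s}kovi\'{c}~\cite{trotignon2010chord} for graphs with no cycle with a unique chord. Your submission is, by your own admission, a programme rather than a proof, so the honest assessment is that the statement remains unproved by what you have written.

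Beyond that, two concrete problems. First, your base cases are justified for the wrong reason: for $k\le 2$ the class is $\chi$-bounded not because every graph is $k$-fan-free but for the opposite reason --- a graph with no induced $1$-fan has no edges, and a graph with no induced $2$-fan is a forest (a vertex on a cycle has two neighbours joined by a shortest, hence induced, path in the rest of the graph). Second, and more seriously, the levelling step does not close. Your observation that a parent $u\in L_{i-1}$ with $k$ neighbours on an induced path of $G[L_i]$ yields an induced $k$-fan in $G$ is correct, but it only constrains the bipartite interaction between consecutive levels; it gives no control over vertices of $L_i$ with many neighbours on induced paths \emph{inside} $L_i$, so you have not shown that $G[L_i]$ is $(k-1)$-fan-free or that it lies in any previously controlled class, and the proposed induction on $k$ has no hypothesis to invoke. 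This is precisely where such arguments stall, and Theorem~\ref{burling} makes the difficulty vivid: Burling graphs are triangle-free, have unbounded chromatic number, and contain no induced wheel, so if Conjecture~\ref{k-fan} is true they must contain induced $k$-fans for every $k$; a genuine proof must be able to locate such fans in constructions where every wheel-based argument provably fails, and nothing in your sketch engages with that.
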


Conjecture~\ref{k-fan} is known to be true in the case that $k\le 3$, as graphs with no induced cycle with a unique chord (or equivalently no induced fan) are $\chi$-bounded~\cite{trotignon2010chord}. Conjecture~\ref{k-fan} also strengthens the conjecture of Aboulker and Bousquet~\cite{aboulker2015excluding} as a $k$-fan contains an induced cycle with exactly $k-2$ chords.
It looks likely that a biclique version of Scott and Seymour's conjecture should hold too.

\begin{conjecture}\label{bicliquie}
	For every pair of positive integers $k,\ell$, the class of graphs with no induced $k$-wheel and no induced $K_{\ell,\ell}$ is $\chi$-bounded.
\end{conjecture}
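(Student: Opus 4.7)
The plan is to attack Conjecture~\ref{bicliquie} by induction on $k$, combining a Scott-style BFS levelling argument with a Gy\'arf\'as-type hole extraction and using the $K_{\ell,\ell}$-free hypothesis as a Ramsey pruning tool whenever many vertices develop parallel attachments to a common substructure. Throughout we fix $\omega = \omega(G)$ and try to bound $\chi(G)$ by a function $f(k,\ell,\omega)$.

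For the base case $k \leq 3$, the aim is to show that wheel-free $K_{\ell,\ell}$-free graphs of bounded clique number have bounded chromatic number. I would root a BFS tree at an arbitrary vertex, extract a single level $L_i$ with $\chi(L_i) \geq \chi(G)/2$, and then use iterated levelling inside $L_i$ (together with long-hole lemmas for biclique-free graphs) to locate a very long induced path $P$ in $L_i$. Closing $P$ into a long induced hole $C$ by routing its endpoints through earlier BFS levels, the no-wheel hypothesis forces every vertex outside $C$ to have at most two neighbours on $C$. A pigeonhole applied to the attachment patterns of the vertices of $L_{i-1}$ onto $C$ should then, for $\chi(G)$ sufficiently large, produce either a $K_{\ell,\ell}$ (from many vertices repeating the same pair of attachments on a common segment of $P$) or enough concentration of neighbours to build a third spoke, contradicting wheel-freeness.

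For the inductive step $k \geq 4$, assuming the conjecture for $k-1$, one would apply the inductive hypothesis to a suitable large-chromatic subgraph of $G$ to obtain an induced $(k-1)$-wheel with centre $v$ and rim $C$; one would then iterate the levelling relative to $v$ and $C$, attempting to upgrade the spoke count from $k-1$ to $k$ by finding an additional neighbour of $v$ on some hole through the existing spoke endpoints, again using $K_{\ell,\ell}$-freeness to limit the number of parallel routes one needs to track.

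The main obstacle is the base case $k = 3$. Theorem~\ref{burling} shows that without the $K_{\ell,\ell}$-free hypothesis, wheel-free triangle-free graphs already have unbounded chromatic number via Burling's construction, so the $K_{\ell,\ell}$-free hypothesis must be used in a way that specifically rules out Burling-like iterated templates. The most delicate step is the closing operation that turns a long induced path in a BFS level into a long induced hole without creating a forbidden configuration along the closing routes: this is precisely the place where Burling-type growth would reappear if one were not careful, and at present I do not see a clean way to carry it out without substantial new ideas.
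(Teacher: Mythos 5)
This statement is a \emph{conjecture} in the paper, not a theorem: the text offers no proof, notes that Bousquet and Thomass\'e established the triangle-free case and the case $k=3$, and attributes a full proof to Scott and Seymour (personal communication) via the stronger statement that graphs with no induced $k$-wheel, no induced $K_{\ell,\ell}$ and no $K_n$ have bounded minimum degree --- a degeneracy argument quite different from your levelling-and-hole-extraction plan. So there is no in-paper proof to match your attempt against, and in any case your proposal is not a proof: you concede yourself that you cannot close the base case, and Theorem~\ref{burling} shows exactly why a naive Gy\'arf\'as--Scott levelling cannot work there without the $K_{\ell,\ell}$-hypothesis entering in an essential and so far unspecified way.

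Beyond the admitted gap, two steps fail concretely. First, the pigeonhole at the end of your base case cannot produce a $K_{\ell,\ell}$ for $\ell\ge 3$: many vertices of $L_{i-1}$ sharing the same pair of attachment vertices on the hole $C$ yields only a $K_{2,m}$, which is not forbidden; to extract a $K_{\ell,\ell}$ you would need $\ell$ vertices with $\ell$ \emph{common} neighbours, and vertices with at most two neighbours on $C$ can never supply this. So the dichotomy ``$K_{\ell,\ell}$ or a third spoke'' does not materialize from the attachment analysis as described. Second, the inductive step is not an argument: applying the inductive hypothesis to a large-chromatic subgraph of a $K_{\ell,\ell}$-free graph does give an induced $(k-1)$-wheel, but an induced $(k-1)$-wheel is not a forbidden configuration in a graph with no induced $k$-wheel, so nothing is contradicted, and no mechanism is given for why $K_{\ell,\ell}$-freeness should force a $k$-th spoke onto \emph{some} hole through the existing spokes (the centre $v$ may simply have exactly $k-1$ neighbours on every relevant hole). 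If you want to pursue this conjecture, the productive route is the one Scott and Seymour took: show directly that a vertex of very large degree in a $K_{\ell,\ell}$-free, $K_n$-free graph forces an induced $k$-wheel, and then colour greedily using the resulting degeneracy bound.
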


Bousquet and Thomass{\'e}~\cite{bousquet2015chromatic} proved two special cases that we believe provided significant evidence in support of Conjecture~\ref{bicliquie}. They proved Conjecture~\ref{bicliquie} for triangle-free graphs, and also in the case $k=3$. Very recently, Scott and Seymour~\cite{scott2021email} proved Conjecture~\ref{bicliquie} in full. In fact they proved a strengthening that graphs containing no induced $k$-wheel and no induced $K_{\ell,\ell}$ or $K_n$ have bounded minimum degree.

Burling graphs have proved effective at showing that a number of other classes are not $\chi$-bounded. They were originally introduced to show that intersection graphs of axis-aligned boxes in $\mathbb{R}^3$ are not $\chi$-bounded~\cite{burling1965coloring}.
Much later Pawlik~et~al~\cite{pawlik2014triangle} showed that intersection graphs of segments in the plane also contain Burling graphs.
This disproved a purely graph theoretical conjecture of Scott~\cite{scott1997induced} that graphs not containing an induced subdivision of a given graph are $\chi$-bounded.
Pawlik~et~al~\cite{pawlik2013triangle} further showed that a number of other classes of intersection graphs in the plane, in particular restricted frame graphs, also contain Burling graphs.
By closely examining restricted frame graphs, Chalopin~et~al~\cite{chalopin2016restricted} found additional counter-examples to Scott's conjecture.

As part of an extensive study of Burling graphs, Pournajafi and Trotignon~\cite{Pournajaf2021burling} recently gave five new characterizations of Burling graphs.
These new characterizations are particularly well suited for showing that certain graphs are not induced subgraphs of Burling graphs.
In~\cite{Pournajaf2021burling3}, they found a number of new surprising  counter-examples to Scott's conjecture.
The new characterizations are also used to give an alternative proof of Theorem~\ref{burling}~\cite{Pournajaf2021burling2}.

The remainder of the paper is dedicated to proving Theorem~\ref{burling}.

\section{The proof}

First we introduce some notation. Given a graph $G$ and vertices $X\subseteq V(G)$, we let $G[X]$ denote the \emph{induced subgraph} of $G$ on vertex set $X$. Similarly for vertex deletion, we denote by $G\backslash X$, the graph obtained from $G$ by deleting the vertices of $X$. The \emph{neighbourhood} $N_G(v)$ of a vertex $v$ in a graph $G$ is the set of vertices adjacent to $v$. We omit the subscript when the graph is obvious. The \emph{length} of a path is equal to its number of edges.

A \emph{graft} is a pair $(G,T)$ where $G$ is a graph and $T$ is a subset of $V(G)$.
A graft $(H,M)$ is an \emph{induced subgraft} of a graft $(G,T)$ if there exists a $X\subset V(G)$ such that $(H,M)= (G[X], X\cap T)$. Two grafts $(G,T)$ and $(H,M)$ are \emph{isomorphic} if there is an isomorphism between $G$ and $H$ that maps $T$ onto $M$. We use $\cong$ to indicated that a pair of graphs or grafts are isomorphic.
Given a fan $F$ consisting of an induced path $P$ and a single additional vertex $f$, we call the vertex $f$ the \emph{pivot} of $F$. 
A \emph{guarded fan} is a graft $(F,M)$ such that $F$ is a fan with pivot $f$, and the endpoints of the path $F\backslash \{f\}$ are contained in $M$.
A \emph{mountable path} is a graft $(P,M)$ such that $P$ is a path and $|M|\ge 3$.

Burling graphs can be constructed from one very basic graft by use of three operations on grafts, we define these operations next.
Given a graft $(G,T)$, and a vertex $t\in T$, let \textsc{pendent}$((G,T), t)$ be the graft $(G',T')$ where $G'$ is obtained from $G$ by adding a new vertex $t'$ whose only neighbour is $t$, and $T'=(T\cup \{t'\})\backslash \{t\}$.
Let \textsc{clone}$((G,T),t)$ be the graft $(G',T')$ where $G'$ is obtained from $G$ by adding a new vertex $t'$ whose neighbourhood is exactly the neighbourhood of $t$ in $G$, and $T'=T\cup \{t'\}$.
Given two grafts $(G_1,T_1), (G_2,T_2)$, and a set $X\subseteq T_1$ of vertices with $|X|=|T_2|$, and whose neighbourhoods in $G_1$ are all the same, let \textsc{join}$((G_1,T_1), X, (G_2,T_2))$ be the graft $(G',T_1)$ where $G'$ is obtained from the disjoint union of $G_1$ and $G_2$ by identifying each vertex of $T_2$ with a different vertex of $X$ (note that the choices here are equivalent up to isomorphism). We call this \emph{joining} $(G_2,T_2)$ onto $X$.

A graft $(G,T)$ is \emph{clean} if;
\begin{enumerate}[{(1)}]
	\item $G$ is triangle-free,
	
	\item $T$ is a stable set,
	
	\item $G$ contains no induced wheel.
	
	\item $(G,T)$ contains no induced guarded fan, and
	
	\item $(G,T)$ contains no induced mountable path.
\end{enumerate}

Note that every induced subgraft of a clean graft is clean.
Next we wish to show that each of the operations \textsc{pendent}, \textsc{clone}, and \textsc{join} preserve a graft being clean.
Conditions (4) and (5) are required to ensure that none of these operations can create a wheel.

\begin{lemma}\label{pendent}
	Let $(G,T)$ be a clean graft, and let $t\in T$. Then \textsc{pendent}$((G,T), t)$ is clean.
\end{lemma}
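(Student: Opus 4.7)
The plan is to verify the five defining conditions of cleanness for $(G', T') = \textsc{pendent}((G, T), t)$, where $G'$ is obtained from $G$ by attaching a new leaf $t'$ to $t$ and $T' = (T \cup \{t'\}) \setminus \{t\}$. The central observation is that $\deg_{G'}(t') = 1$, with unique neighbour $t$; this severely restricts how $t'$ can sit inside any forbidden configuration.

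Conditions (1)--(3) are immediate. Since $t'$ has degree one, it cannot lie in a triangle, on any induced cycle of length at least four, nor can it be the hub of a wheel (which requires at least three neighbours on the cycle). Thus any triangle or induced wheel in $G'$ would already be present in $G$, contradicting the cleanness of $(G, T)$. For (2), the only new edge is $tt'$, and its $T'$-endpoint $t'$ is paired with $t \notin T'$, so $T'$ remains a stable set.

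For (4), suppose for contradiction that $(F, M)$ is an induced guarded fan in $(G', T')$ that uses $t'$. Being of degree one, $t'$ cannot be the pivot of $F$ nor an internal vertex of the path $F \setminus \{f\}$, so $t'$ must be an endpoint of that path, with its unique neighbour $t$ being the vertex adjacent to $t'$ on the path. I would then check that deleting $t'$ yields an induced guarded fan in $(G, T)$ with $t$ replacing $t'$ as an endpoint. The one corner case is that the shortened path still has length at least two; but if the original path had length exactly two, then the pivot would have to be adjacent to $t'$, contradicting $\deg(t') = 1$. Hence $F \setminus \{t'\}$ is a fan in $G$ with path-endpoints $t$ and the other original endpoint $b$, both lying in $T$, contradicting the cleanness of $(G, T)$.

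The argument for (5) is analogous. If $(P, M)$ is an induced mountable path in $(G', T')$ that uses $t'$, the degree constraint again forces $t'$ to be an endpoint of $P$. Setting $P^* = P \setminus \{t'\}$ and $M^* = (M \setminus \{t'\}) \cup \{t\}$, I would verify that $|M^*| \geq 3$, using that $t \notin M$ (since $t \notin T'$) so $t$ contributes a fresh element to the distinguished set. This exhibits an induced mountable path in $(G, T)$, giving the final contradiction. No step poses a substantive obstacle; the only delicate point throughout is the bookkeeping between $T$ and $T'$ when reinterpreting induced subgrafts across the \textsc{pendent} operation.
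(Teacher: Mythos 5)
Your proposal is correct and follows essentially the same route as the paper: use the fact that $t'$ has degree one to dispose of conditions (1)--(3), force $t'$ to be a path-endpoint in any guarded fan or mountable path, and then swap $t'$ for its neighbour $t$ (which lies in $T$) to pull the forbidden configuration back into $(G,T)$. The bookkeeping you flag between $M$, $T$, and $T'$ is exactly the content of the paper's argument, and your corner-case check for (4) is a harmless elaboration of a point the paper leaves implicit.
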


\begin{proof}
	Let $(G',T')= \textsc{pendent}((G,T), t)$.
	Clearly (1) holds as $G'$ is obtained from $G$ by adding the vertex $t'$, which only has a single neighbour in $G'$.
	As $T$ is a stable set of $G$, it is also a stable set of $G'$. Therefore as the only neighbour of $t'$ is $t$, the set $T'= (T\cup \{t'\})\backslash \{t\}$ must also be a stable set of $G'$. So (2) holds.
	Wheels contain no vertices of degree less than 2, so (3) also holds.
	
	Suppose that $(G',T')$ contains an induced guarded fan $(F,M)$ with pivot vertex $f$. Then as $(G,T)$ contains no induced guarded fan, $t'$ must be an endpoint of the path $F\backslash \{f\}$. Furthermore as $t'$ has degree 1 in $G'$, we see that $t$ must be the vertex of $F\backslash \{f\}$ that's adjacent to $t'$. Then $(F\backslash \{t'\}, (M \cup \{t\}) \backslash \{t'\})$ is an induced guarded fan of $(G,T)$, a contradiction. Hence (4) holds.
	
	If $P$ is an induced path of $G'$ that contains $t'$ and has length at least 1, then $P$ must contain $t$. So $|V(P)\cap T'| = |V(P \backslash \{t'\}) \cap T|\le 2$. Hence (5) holds and so $(G',T')$ is clean.
\end{proof}

\begin{lemma}\label{clone}
	Let $(G,T)$ be a clean graft, and let $t\in T$. Then \textsc{clone}$((G,T), t)$ is clean.
\end{lemma}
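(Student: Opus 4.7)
My plan is to follow the template of Lemma~\ref{pendent}, verifying each of (1)--(5) in turn for $(G',T') = \textsc{clone}((G,T),t)$. The key observation is that $t$ and $t'$ are \emph{false twins} in $(G',T')$: we have $N_{G'}(t') = N_{G'}(t)$, the vertices $t$ and $t'$ are non-adjacent, and both belong to $T'$ (since $t \in T \subseteq T'$ and $t' \in T'$ by construction). Consequently, for any induced subgraph $H$ of $G'$ with $t' \in V(H)$ but $t \notin V(H)$, the substitution $t' \mapsto t$ produces an isomorphic induced subgraft of $(G,T)$.

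Condition (1) is immediate, since any triangle on $t'$ would, after substitution, give a triangle on $t$ in $G$. Condition (2) holds because $N(t') = N(t)$ is disjoint from $T$ (as $T$ is stable) and $t \not\sim t'$. For conditions (3)--(5), given a putative forbidden configuration $H$ in $(G',T')$, I would split on how $\{t,t'\}$ meets $V(H)$. If $t' \notin V(H)$, then $H \subseteq V(G)$ and cleanness of $(G,T)$ gives a contradiction directly. If $t' \in V(H)$ but $t \notin V(H)$, the twin substitution yields a forbidden configuration in $(G,T)$. The real work lies in the case $\{t,t'\} \subseteq V(H)$.

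The main obstacle is condition (3), specifically the case when both $t$ and $t'$ are contained in an induced wheel $W$ with cycle $C$ and hub $h$. If both twins lie on $C$, then $N(t) \cap V(C) = N(t') \cap V(C)$ together with the fact that every vertex of an induced cycle has exactly two cycle-neighbours forces $|C| = 4$ with $t, t'$ diagonally opposite; the hub $h$ then has three neighbours on this $C_4$, at least two of which are adjacent, creating a triangle and contradicting (1). If instead one of $t, t'$ is the hub (say $t$) and the other lies on $C$, then $|N(t) \cap V(C)| \geq 3$ while $|N(t') \cap V(C)| = 2$ (exactly the two cycle-neighbours of $t'$), contradicting $N(t) = N(t')$. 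The analogous split handles (4): if both twins lie on the path of a guarded fan, the twin argument forces the path to be exactly $t$--$u$--$t'$, and then the pivot (adjacent to all three path-vertices) forms a triangle with $t$ and $u$; if one twin is the pivot, its $\geq 3$ path-neighbours cannot match the $\leq 2$ path-neighbours of the other twin. For (5), the only way both twins can lie on an induced path $P$ is for $P$ to be exactly $t$--$u$--$t'$; then $|V(P)\cap T'| \geq 3$ would require $u \in T$, but $t, u \in T$ together with the edge $tu$ would violate the stability of $T$.
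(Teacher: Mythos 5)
Your proof is correct and follows essentially the same route as the paper: both hinge on the observation that $t$ and $t'$ are non-adjacent twins in $G'$ and that none of the forbidden configurations in a triangle-free graft can contain a pair of distinct vertices with identical neighbourhoods, so any such configuration meets $\{t,t'\}$ in at most one vertex and the substitution $t'\mapsto t$ transfers it into $(G,T)$. Your case analyses for (3)--(5) simply make explicit the paper's one-line claim that triangle-free wheels and fans (and, for (5), paths long enough to carry three stable marked vertices) admit no such twin pair.
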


\begin{proof}
	Let $(G',T')= \textsc{clone}((G,T), t)$. As $N_{G'}(t')=N_G(t)$ and $G'\backslash \{t'\}=G$ is triangle-free, so is $G'$. So (1) holds. Similarly as $T$ is a stable set in $G$ and $t\in T$, the set $T'=T\cup \{t'\}$ must also be stable. Hence (2) holds as well.
	
	For (3) and (4) it is enough to notice that if $H$ is either a triangle-free fan or a triangle-free wheel, then $H$ contains no pair of distinct vertices $u,v$ with $N_H(u)=N_H(v)$.
	
	Lastly as $T'$ is a stable set, an induced mountable path would have to have length at least four. But again if $P$ is a path of length at least four, then $P$ contains no pair of distinct vertices $u,v$ with $N_P(u)=N_P(v)$. Hence (5) holds, and so $(G',T')$ is clean.
\end{proof}

\begin{lemma}\label{join}
	Let $(G_1,T_1)$, $(G_2,T_2)$ be clean grafts, and let $X\subseteq T_1$ be a set of vertices with $|X|=|T_2|$, and whose neighbourhoods are all the same in $G_1$. Then \textsc{join}$((G_1,T_1), X, (G_2,T_2))$ is clean.
\end{lemma}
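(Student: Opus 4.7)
The plan is a case analysis on where a putative forbidden substructure lies relative to the two sides of the join. Write $V_1=V(G_1)$ and $V_2=V(G_2)\setminus T_2$, and identify $T_2$ with $X$, so that $V(G')=V_1\sqcup V_2$; every edge of $G'$ lies either in $E(G_1)$ (both endpoints in $V_1$) or in $E(G_2)$ (both endpoints in $V_2\cup X$); there is no edge between $V_1\setminus X$ and $V_2$; and the vertices of $X$ all share a common $G_1$-neighbourhood $N\subseteq V_1\setminus X$. Conditions (1) and (2) are immediate from this decomposition, since any triangle in $G'$, and any edge with both ends in $T_1$, would lie entirely on one side and contradict the corresponding condition for $(G_1,T_1)$ or $(G_2,T_2)$.

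For conditions (3), (4), (5), suppose that $H$ is an induced forbidden substructure of $(G',T_1)$ (a wheel, a guarded fan, or a mountable path), and partition $V(H)=A\sqcup X_H\sqcup B$ with $A\subseteq V_1\setminus X$, $X_H\subseteq X$, and $B\subseteq V_2$. If $A=\emptyset$ then $V(H)\subseteq V(G_2)$ and, after identifying $X$ with $T_2$, the induced subgraft of $(G',T_1)$ on $V(H)$ coincides with an induced subgraft of $(G_2,T_2)$ of the same type, contradicting cleanness of $(G_2,T_2)$; symmetrically $B=\emptyset$ contradicts cleanness of $(G_1,T_1)$. So both $A$ and $B$ are non-empty, and since no edge of $G'$ joins $V_1\setminus X$ to $V_2$, every subpath of $H$ that changes sides must pass through $X_H$. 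Consequently, on the underlying cycle or path of $H$, the maximal subpaths avoiding $A$ (call them $P_1,\dots,P_r$) and the maximal subpaths avoiding $B$ (call them $Q_1,\dots,Q_s$) all have both endpoints in $X_H$, and together these subpaths cover $V(H)$.

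The core of the argument is then to transplant $H$ to a single side, producing a forbidden substructure in either $(G_1,T_1)$ or $(G_2,T_2)$. For example, if $H$ is a wheel with pivot $f\in V_2$, then all $\ge 3$ cycle-neighbours of $f$ lie on $\bigcup V(P_i)$: if three of them lie on a single $P_i$, then the set $\{f\}\cup V(P_i)$ induces in $G_2$ a fan whose path-endpoints lie in $X_H\subseteq T_2$, i.e.\ an induced guarded fan in $(G_2,T_2)$, contradiction; otherwise the neighbours spread across two or more $P_i$, and one splices two such subpaths through $f$ (or finds a short cycle through $f$ and a subpath of one $P_i$) to produce in $(G_2,T_2)$ either an induced mountable path (exploiting that the spliced path passes through $\ge 3$ vertices of $X_H\subseteq T_2$) or a smaller induced wheel. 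Symmetric arguments handle $f\in V_1\setminus X$, now using the $Q_j$ and, if needed, a vertex of $N$ (adjacent to every vertex of $X_H$) to close cycles; the case $f\in X$ combines both. Analogous, generally easier, case analyses handle $H$ being a guarded fan or a mountable path; for the latter one uses that $|M|\ge 3$ forces either three $T_1$-vertices on one side (immediately yielding a mountable path in $(G_1,T_1)$ or $(G_2,T_2)$) or two of them in $X_H$ plus a vertex of $N$ that can be appended to form a forbidden substructure. The main obstacle throughout is the bookkeeping: one must verify in each sub-case that the transplanted substructure is genuinely induced (no chord is introduced from the discarded side) and that the count of $T$-vertices inherited from $X_H$ meets the required threshold, which ultimately reduces to $X_H$ being stable in $G'$ and the internal vertices of each $P_i$ and $Q_j$ contributing no additional $T$-vertices.
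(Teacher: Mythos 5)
Your setup (the decomposition $V(G')=V_1\sqcup V_2$, the reduction to substructures meeting both sides, and the maximal one-sided subpaths with endpoints in $X_H$) matches the skeleton of the paper's argument, and the sub-case where the hub has three rim-neighbours on a single $P_i$ is handled correctly. But the heart of the proof --- the ``otherwise'' branch for wheels --- is a gesture rather than an argument, and the one concrete claim you make there is false. In the critical configuration every $P_i$ contains exactly two vertices of $X_H$ (a $P_i$ with three would itself be an induced mountable path in $(G_2,T_2)$ and you would already be done), so the path obtained by splicing two of the $P_i$ through $f$ has only its two endpoints in $X\subseteq T_1$: it is not a mountable path. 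No construction of the promised ``smaller induced wheel'' is given either. Similarly, for the mountable-path case, appending a vertex of $N=N_{G_1}(X)$ cannot help in the way you suggest: $T_1$ is stable and $X\subseteq T_1$, so $N$ is disjoint from $T_1$ and contributes no $T_1$-vertices. The actual point there is that a vertex of $N$ lying on an induced path that also contains an $X$-to-$X$ subpath through $V_2$ would close a cycle, so no such vertex exists and the whole path lives on the $G_2$ side.

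What is missing is the structural fact the paper's proof pivots on: since the vertices of $X$ have a common neighbourhood in $G_1$, every vertex of $V(G_1)$ adjacent to one vertex of $X$ is adjacent to all of them. For a wheel $W$ meeting both sides, any two vertices of $V(W)\cap N_{G_1}(X)$, together with an induced $X$-to-$X$ path of $W$ through $V(G')\setminus V(G_1)$ (which exists by $2$-connectivity of $W$), induce a theta inside $W$ --- impossible. Hence $W$ contains exactly one vertex $y$ of $V(G_1)\setminus X$, with $N_W(y)=V(W)\cap X$ lying on an induced path of $W\setminus\{y\}$; cleanness of $(G_2,T_2)$ (no mountable path) then forces $\deg_W(y)=2$, and $W\setminus\{y\}$ becomes a guarded fan in $(G_2,T_2)$, the final contradiction. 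Without this step your case analysis does not close, and your concluding remark that the $T$-vertex count ``ultimately reduces to'' routine bookkeeping identifies exactly the place where the proof fails rather than a place where it succeeds.
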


\begin{proof}
	Let $(G',T_1)=\textsc{join}((G_1,T_1), X, (G_2,T_2))$. In $G'$ there are no edges between the two sets of vertices $V(G_1) \backslash X$ and $V(G')\backslash V(G_1)$. Furthermore $G' \backslash (V(G_1) \backslash X) \cong G_2$ and $G'\backslash (V(G_2)\backslash T_2)=G_1$ are both triangle-free. Therefore $G'$ is triangle-free too, and so (1) holds.
	Next observe that $(G'[V(G_1)], T_1 \cap V(G_1)) = (G_1, T_1)$. So $T_1$ is a stable set in $G'$ as it is a stable set in $G_1$. So (2) holds.
	
	Suppose for sake of contradiction that $G'$ contains an induced wheel $W$. Note that $W$ must be triangle-free as (1) holds.
	As both $G' \backslash (V(G_1) \backslash X) \cong G_2$ and $G'\backslash (V(G_2)\backslash T_2)=G_1$ contain no induced wheel, $W$ must contain a vertex of both $V(G_1)\backslash X$ and $V(G_2)\backslash T_2$.
	Let $s$ be a vertex of $W$ that's contained in $V(G_2)\backslash T_2$.
	As $W$ is 2-connected, there exists two internally vertex disjoint paths $P_1$ and $P_2$ in $W$ between $s$ and $V(W)\cap (V(G_1)\backslash X)$. Every path in $G'$ between $V(G_1)\backslash X$ and $V(G_2)\backslash T_2$ contains a vertex of $X$. It follows that $W$ contains an induced path $P_3$ of length at least 2 with distinct endpoints in $X$, and whose internal vertices are contained in $V(G') \backslash V(G_1)$.
	Let $Y$ be the vertices of $V(W)\cap V(G_1)$ that are adjacent to a vertex of $X$ in $G_1$.
	Note that in $G_1$, each vertex of $Y$ is adjacent to each vertex of $X$.
	So $|Y|\le 1$, as otherwise $W[Y\cup V(P_3)]$ would contain an induced theta, which is impossible since no wheel contains an induced theta.
	Furthermore $Y$ is non-empty as $W$ contains a vertex of $V(G_1)\backslash X$.
	Since $W$ is 2-connected, the vertex $y\in Y$ must be the unique vertex of $V(W)\cap (V(G_1) \backslash X)$.
	The graft $(W\backslash \{y\}, V(W)\cap X)$ is clean since $(G' \backslash (V(G_1) \backslash X), X) \cong (G_2, T_2)$ is clean. Note that $N_W(y)=V(W)\cap X$, and furthermore there is an induced path of $W\backslash \{y\}$ that contains the vertices of $N_W(y)=V(W)\cap X$.
	So $y$ must have degree 2 in $W$, since otherwise $(W\backslash \{y\}, V(W)\cap X)$ would contain an induced mountable path.
	But then $(W\backslash \{y\}, V(W)\cap X)$ is a guarded fan, a contradiction.
	Hence (3) holds.
	
	This time we will show (5) next. Suppose for sake of contradiction that $(G',T_1)$ contains an induced mountable path. Let $(P,V(P)\cap T_1)$ be an induced mountable path of $(G',T_1)$ with $|V(P)|$ minimum. Then by minimality, $P$ is an induced path of $G'$ with endpoints in $T_1$ such that $|V(P)\cap T_1|= 3$. As $(G_1,T_1)$ contains no mountable path, $P$ must contain a vertex of $G'\backslash V(G_1)$. Then $P$ must contain a subpath $P'$ that's contained in $G'\backslash (V(G_1)\backslash X)$ whose endpoints are two distinct vertices of $X$. Then $V(P)\cap N_{G_1}(X)$ must be empty as $P$ contains no cycle. Hence $P$ is an induced subgraph of $G'\backslash (V(G_1) \backslash X)$. So then $(P,X\cap V(P))$ is an induced mountable path contained in $(G' \backslash (V(G_1) \backslash X), X) \cong (G_2, T_2)$, contradicting the fact that $(G_2,T_2)$ is clean. Hence (5) holds.
	
	Lastly we shall show (4).
	Suppose for sake of contradiction that $(G',T_1)$ contains an induced guarded fan $(F,M)$ with pivot vertex $f$. As before $F$ must be triangle-free as (1) holds. Also $F$ must contain a vertex of both $V(G_1)\backslash X$ and $V(G_2)\backslash T_2$.
	Let $P=F\backslash \{f\}$. Then $P$ is an induced path of $G'$ with length at least 4 (since $F$ is triangle-free) and with endpoints in $T_1$. Furthermore as (5) holds, we have that $|V(P)\cap T_1|=2$.
	Every path in $G'$ between $T_1 \backslash X \subseteq V(G_1)\backslash X$ and $V(G_2)\backslash T_2$ contains a vertex of $X\subseteq T_1$. Therefore $P$ must be an induced subgraph of either $G'\backslash (V(G_2) \backslash T_2)$ or $G' \backslash (V(G_1) \backslash X)$. If $P$ was an induced subgraph of $G'\backslash (V(G_2) \backslash T_2)$, then we would have $f\in V(G_2) \backslash T_2$, which is impossible since $f$ has degree at least 3 in $F$ and $|V(P)\cap X|\le 2$. So $P$ must be an induced subgraph of $G' \backslash (V(G_1) \backslash X)$. But then as before we would have $f\in V(G_1) \backslash X$, again contradicting the fact that $f$ has degree at least 3 in $F$.
	Hence $(G',T_1)$ contains no induced guarded fan, and so (4) holds. Hence $(G',T_1)$ is clean.
\end{proof}

Lastly, to prove Theorem~\ref{burling}, we just need to observe that Burling graphs can be obtained from the two vertex graft $(G_1,T_1)$ with $G_1=K_2$ and $|T_1|=1$ by a sequence of \textsc{pendent}, \textsc{clone}, and \textsc{join} operations.
To do this we first present a known construction of Burling graphs in terms of graph-stable set pairs~\cite{chalopin2016restricted}.
Then we present a closely related construction in terms of these graft operations and compare the two.
For other presentations of Burling graphs see~\cite{pawlik2013triangle,scott2020survey,chalopin2016restricted,Pournajaf2021burling}.

A \emph{graph-stable set pair} $(G, \mathcal{S})$ is a graph $G$ together with a set $\mathcal{S}$ of stable
sets of $G$. The procedure \textsc{next} takes as input a graph-stable set pair $(G, \mathcal{S})$ and returns a graph-stable set pair $(G_0, \mathcal{S}_0)$ obtained from $(G, \mathcal{S})$ by
\begin{enumerate}
	\item adding $|S|$ disjoint copies $(H_S, \mathcal{S}(H_S))$ of $(G, \mathcal{S})$, indexed by stable sets $S \in S$,
	
	\item adding a vertex $v_{S,T}$ whose neighbourhood is exactly $T$ for each $S \in \mathcal{S}$ and for each $T \in \mathcal{S}(H_S)$, and
	
	\item setting $\mathcal{S}_0$ as the union of $\{S \cup T : S \in \mathcal{S} , T \in \mathcal{S}(H_S)\}$ and $\{S \cup \{v_{S,T} \} : S \in \mathcal{S}, T \in \mathcal{S}(H_S)\}$.
\end{enumerate}
Let $(G'_1,\mathcal{S}_1)$ be the graph-stable set pair with $G'_1=K_1$ and $\mathcal{S}_1=V(G_1')$. Given $(G'_k,\mathcal{S}_k)$, let $(G'_{k+1},\mathcal{S}_{k+1})$ be the graph-stable set pair obtain by performing the \textsc{next} procedure on $(G'_k,\mathcal{S}_k)$. The graph $G_k'$ is the \emph{$k$-th Burling graph}.

Next we present a similar constructions of grafts $(G_k,T_k)$ using the \textsc{pendent}, \textsc{clone}, and \textsc{join} operations. By comparing the two constructions at each step, we may see that an equivalent definition of the graft $(G_k,T_k)$ is that the graph $G_k$ is obtained from $G_k'$ by, for each stable set $S\in \mathcal{S}_k$, adding a new vertex $u_S$ with neighbourhood $S$ and setting $T_k=\{u_S : S\in \mathcal{S}_k\}$.

Let $(G_1,T_1)$ be the graft with $G_1=K_2$ and $|T_1|=1$. Given $(G_k,T_k)$, we construct a new graft $(G_{k+1}, T_{k+1})$ from $(G_k,T_k)$ as follows.
\begin{enumerate}
	\item Create $|T_k|$ separate copies $(H_u,S_u)$ of $(G_k,T_k)$, indexed by vertices $u\in T_k$.
	
	\item
	\begin{enumerate}
		\item For each pair $u,v\in T_k$, perform the \textsc{clone} operation on the vertex $v\in S_u\subset V(H_u)$ to obtain a new vertex $v_u$.
		
		\item For each pair $u,v\in T_k$, perform the \textsc{pendent} operation on the vertex $v_u$ to obtain a new vertex $t_{u,v}$.
	\end{enumerate}
For each $u\in T_k$, this obtains a new graft $(H_u',S_u')$ from $(H_u,S_u)$ with $S_u'=S_u \cup \{t_{u,v} : v\in S_u\}$.

	\item
	\begin{enumerate}
		\item For each $u\in T_k$, perform the \textsc{clone} operation on the vertex $u\in V(G_k)$ a total of $2|T_k|-1$ times and let $X_u$ be the resulting set of $2|T_k|$ vertices consisting of $u$ and the $2|T_k|-1$ vertices obtained by performing the \textsc{clone} operation on $u$.
		
		This obtains a new graft $(G_k^* , T_k^*)$ from $(G_k,T_k)$ with $T_k^*=\bigcup_{u\in T_k} X_u$.
		
		\item Let $(G_{k+1}, T_{k+1})$ be the graft obtained from $(G_k^* , T_k^*)$ by repeated application of the \textsc{join} operation to join $(H_u',S_u')$ onto $X_u$ for each $u\in T_k$.
	\end{enumerate}
\end{enumerate}

Comparing the two constructions, we can observe that $G_k'$ and $G_k\backslash T_k$ are isomorphic. Indeed, without the vertices $T_k$, the graft construction is essentially the same as the graph-stable set construction if we set $\mathcal{S}_k=\{N_{G_k}(t) : t\in T_k\}$.
Thus we obtain the following.

\begin{proposition}\label{Burlingeq}
	Let $k$ be a positive integer. Let $G^*$ be the graph obtained from $(G'_k, \mathcal{S}_k)$ by adding in a vertex $v_S$ with neighbourhood $S$ for each $S\in \mathcal{S}_k$. Then $(G^*,\{v_S : S\in \mathcal{S}_k\})$ is isomorphic to $(G_k,T_k)$.
\end{proposition}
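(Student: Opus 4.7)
The plan is to prove Proposition~\ref{Burlingeq} by induction on $k$, tracing the two recursive procedures side by side. The base case $k=1$ is immediate: both $(G_1,T_1) = (K_2,\{u\})$ and the graph-stable set construction with the added vertex $v_{\{x\}}$ of neighbourhood $\{x\}$ yield a single edge with one endpoint distinguished.

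For the inductive step, assume the claim holds for $k$ via an isomorphism sending each representative $v_S$ (for $S\in\mathcal{S}_k$) to a vertex $u_S\in T_k$. Then the copies $(H_u,S_u)$ of $(G_k,T_k)$ created in step~1 of the graft construction correspond, via $u = u_S$, to the copies $(H_S,\mathcal{S}(H_S))$ of $(G'_k,\mathcal{S}_k)$ in the \textsc{next} procedure; inside each such copy the inductive hypothesis gives a bijection between $S_u$ and the representatives $v_T$ of stable sets $T\in\mathcal{S}(H_S)$, with the property that the vertex of $S_u$ corresponding to $v_T$ has neighbourhood $T$ inside the $G'_k$-part of $H_u$. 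I would then check that the clone of $v_T\in S_u$ in step~2(a) produces a new vertex with neighbourhood $T$ in $H_S$ --- which is exactly the role of $v_{S,T}$ in the \textsc{next} procedure --- and that the subsequent pendent in step~2(b) produces $t_{u,T}$, which will contribute one of the two families of $T_{k+1}$-vertices at the end.

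Step~3 is the glueing step. Cloning $u=u_S$ in step~3(a) produces $X_u$, a set of $2|T_k|$ vertices all with the same neighbourhood $S$ in $G_k$. Joining $(H'_u,S'_u)$ onto $X_u$ identifies each vertex of $X_u$ either with some $v_T\in S_u$ or with some $t_{u,T}$, and the resulting $T_{k+1}$-vertex has neighbourhood equal to the union of $S$ and the corresponding neighbourhood in $H'_u$. This union is either $S\cup T$ or $S\cup\{(v_T)_u\}$, and after identifying $(v_T)_u$ with $v_{S,T}$ these match the two families of elements of $\mathcal{S}_{k+1}$, namely $S\cup T$ and $S\cup\{v_{S,T}\}$. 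Consequently the identified $T_{k+1}$-vertices play the role of the $v_U$-vertices for $U\in\mathcal{S}_{k+1}$; the remaining isomorphism $G_{k+1}\setminus T_{k+1} \cong G'_{k+1}$ then assembles from the per-copy isomorphisms $H_u\setminus S_u \cong H_S$ together with the $(v_T)_u$ taking the role of $v_{S,T}$.

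The main obstacle is not any individual nontrivial step but the bookkeeping: the recursion unfolds through two parallel indexings --- stable sets $S\in\mathcal{S}_k$ versus the representative vertices $u_S$, and pairs $(S,T)$ versus the $2|T_k|$ clones forming $X_u$ --- and one must align these consistently so that each application of \textsc{join} identifies the intended pair of vertices. Once this dictionary is fixed at each stage of the recursion, the required neighbourhood equalities follow directly from the definitions of \textsc{clone}, \textsc{pendent}, and \textsc{join}.
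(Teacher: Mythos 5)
Your proposal is correct and is essentially the paper's own argument: the paper justifies Proposition~\ref{Burlingeq} precisely by ``comparing the two constructions at each step,'' i.e.\ the same induction on $k$ in which the clone of $u_T$ plays the role of $v_{S,T}$ and the joined vertices of $X_u$ acquire neighbourhoods $S\cup T$ and $S\cup\{v_{S,T}\}$, matching the two families in $\mathcal{S}_{k+1}$. Your write-up simply makes explicit the bookkeeping that the paper leaves as an observation.
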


Theorem~\ref{burling} now quickly follows from Proposition~\ref{Burlingeq} and Lemmas~\ref{pendent},~\ref{clone},~\ref{join}.

\begin{proof}[Proof of Theorem~\ref{burling}]
	The graft $(G_1,T_1)$ is trivially clean.
	Therefore by Lemmas~\ref{pendent},~\ref{clone},~\ref{join}, every graft obtainable from $(G_1,T_1)$ by a sequence of \textsc{pendent}, \textsc{clone}, and \textsc{join} operations is clean. In particular for every $k\ge 1$, the graft $(G_k,T_k)$ is clean.
	By Proposition \ref{Burlingeq}, for every $k\ge 1$, the $k$-th Burling graph $G'_k$ is an induced subgraph of $G_k$.
	Hence for every $k\ge 1$, the Burling graph $G_k'$ contains no induced wheel as required.
\end{proof}

We remark that for each $k\ge 1$, to prove that $G_k$ has chromatic number at least $k+1$, one should inductively show that in any proper colouring of $G_k$ with any number of colours that the neighbourhood of some vertex in $T_k$ contains vertices of at least $k$ colours.

\section*{Acknowledgements}

The author would like to thank Jim Geelen and the anonymous referees for helpful suggestions that improved the presentation of this paper. The author also thanks Nicolas Trotignon for informing them of Scott and Seymour's independent proof of Theorem~\ref{burling}.

\bibliographystyle{plain}

\end{document}